\documentclass[11pt]{article}

\setlength{\textwidth}{6.3in}
\setlength{\textheight}{8.7in}
\setlength{\topmargin}{0pt}
\setlength{\headsep}{0pt}
\setlength{\headheight}{0pt}
\setlength{\oddsidemargin}{0pt}
\setlength{\evensidemargin}{0pt}

\usepackage{amssymb}
\usepackage{amsmath} 
\usepackage{amsthm}
\allowdisplaybreaks[2]
 
\newcommand{\Ps}{\mathcal P}
\newcommand{\A}{\mathcal A}
\newcommand{\B}{\mathcal B}
\newcommand{\C}{\mathcal C}
\newcommand{\D}{\mathcal D}
\newcommand{\Os}{\mathcal O}

\theoremstyle{plain}
   \newtheorem {thm}{Theorem}
   \newtheorem{claim}[thm]{Claim}

\title{A Combinatorial Proof of a\\
Partition Identity of Andrews and Stanley}

\author{Andrew V. Sills\\
Department of Mathematics\\
Hill Center--Busch Campus\\ 
Rutgers University\\
Piscataway, NJ 08854-8019\\
\texttt{http://www.math.rutgers.edu/\~{}asills}\\
\texttt{asills@math.rutgers.edu}\\
2000 AMS subject code: 05A17
}
\date{Submitted January 26, 2004\\
      Revised February 5, 2004\\
      Accepted March 4, 2004}

\begin{document}
\maketitle
\begin{abstract}
In his paper, ``On a Partition Function of Richard Stanley," George Andrews proves a certain
partition identity analytically and asks for a combinatorial proof.  This paper provides 
the requested combinatorial proof.
\end{abstract}

\section{Introduction}
In the October 2002 issue of \textit{American Mathematical Monthly},
Richard Stanley posed a problem on partitions~\cite{rs}. 
In~\cite{gea:stanley}, George Andrews studies
the function $S(r,s;n) =$ the number of partitions $\pi$ of $n$ such that 
$\pi$ has $r$ odd parts and the conjugate $\pi'$ of $\pi$ has $s$ odd parts,
and proves a result from which he solves Stanley's
problem as a corollary.

 Andrews also states some additional interesting corollaries, including

\begin{thm}\label{mainresult}
\begin{equation}\label{ASI}
 \sum_{n,r,s\geqq 0} S(r,s;n) z^r y^s q^n
= \prod_{j=1}^\infty \frac{(1+yzq^{2j-1})}{(1-q^{4j})(1-z^2 q^{4j-2})(1-y^2 q^{4j-2})} ,
\end{equation}
\end{thm}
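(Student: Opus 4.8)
The plan is to read a combinatorial model off the right-hand side and then match it to partitions bijectively. Expanding the infinite product, $\prod_{j\ge1}\frac{1+yzq^{2j-1}}{(1-q^{4j})(1-z^2q^{4j-2})(1-y^2q^{4j-2})}$ is the generating function for quadruples $(\alpha,\beta,\gamma,\delta)$ of partitions in which $\alpha$ has distinct odd parts, $\beta$ has all parts $\equiv 0\pmod 4$, and $\gamma,\delta$ have all parts $\equiv 2\pmod 4$, each quadruple carrying the weight $q^{|\alpha|+|\beta|+|\gamma|+|\delta|}\,z^{\ell(\alpha)+2\ell(\gamma)}\,y^{\ell(\alpha)+2\ell(\delta)}$, where $\ell(\cdot)$ denotes the number of parts. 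So Theorem~\ref{mainresult} reduces to constructing a bijection $\Phi$ from partitions $\pi$ of $n$ to such quadruples under which the number of odd parts of $\pi$ becomes $\ell(\alpha)+2\ell(\gamma)$ and the number of odd parts of the conjugate $\pi'$ becomes $\ell(\alpha)+2\ell(\delta)$.

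First I would reduce to partitions into distinct parts. Record $\pi$ by its part-multiplicities $(m_v)_{v\ge1}$, let $D$ be the partition whose parts are exactly the $v$ with $m_v$ odd (so $D$ has distinct parts), and let $\rho$ be the partition with multiplicities $\lfloor m_v/2\rfloor$; then $\pi\leftrightarrow(D,\rho)$ is a bijection with $|\pi|=|D|+2|\rho|$, and the number of odd parts of $\pi$ equals (number of odd parts of $D$) plus twice (number of odd parts of $\rho$). The one point that needs an argument is that the number of odd parts of $\pi'$ equals the number of odd parts of $D'$: since $\pi'_k=\sum_{v\ge k}m_v$, the parity of $\pi'_k$ flips precisely when $k$ passes a part of $\pi$ of odd multiplicity, that is, a part of $D$, so both counts equal the alternating sum $D_1-D_2+D_3-\cdots$. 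Splitting $\rho$ into its even parts (doubled, these become $\beta$) and its odd parts (doubled, these become $\gamma$) accounts exactly for the factor $\prod_{j\ge1}(1-q^{4j})^{-1}(1-z^2q^{4j-2})^{-1}$, and Theorem~\ref{mainresult} is reduced to the identity
\[
\sum_{D}\ z^{r(D)}\,y^{s(D)}\,q^{|D|}\ =\ \prod_{j=1}^{\infty}\frac{1+yzq^{2j-1}}{1-y^2q^{4j-2}},
\]
the sum running over partitions $D$ into distinct parts, with $r(D)$ and $s(D)$ counting the odd parts of $D$ and of $D'$, respectively.

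This last identity — a refinement of the classical $\sum_D q^{|D|}=\prod_{j}(1+q^j)$ — is where the real work lies: one needs a bijection $D\leftrightarrow(\alpha,\mu)$ with $\alpha$ having distinct odd parts (the $\alpha$ of the theorem) and $\mu$ having all parts $\equiv 2\pmod4$ (the $\delta$ of the theorem), satisfying $|D|=|\alpha|+|\mu|$, $r(D)=\ell(\alpha)$, and $s(D)=\ell(\alpha)+2\ell(\mu)$. The natural device is to read the parts of $D$ in consecutive pairs $(D_1,D_2),(D_3,D_4),\dots$ (appending a final $0$ when $D$ has an odd number of parts), which fits the shape of the statistic $s(D)=\sum_i(D_{2i-1}-D_{2i})$: pair $i$ should contribute one part to $\alpha$ for each odd entry of the pair and $\tfrac12\bigl((D_{2i-1}-D_{2i})-\#\{\text{odd entries of the pair}\}\bigr)$ parts to $\mu$ (always a nonnegative integer), with its total size $D_{2i-1}+D_{2i}$ parceled out among these new parts; granting such a map, the three statistics $n$, $r$, $s$ all follow from this bookkeeping. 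I expect the main obstacle to be exactly that parceling rule: taking $\alpha$ to be the odd parts of $D$ violates the $s$-constraint, while any purely local assignment can force a repeated part in $\alpha$ when two parts of $D$ are close in size. The remedy should be a single global rule — sweeping the pairs from smallest to largest, always placing the least still-unused odd integers in $\alpha$ and letting the parts of $\mu$ take up the remaining size of each pair — together with a proof, using the interlacing $D_{2i}>D_{2i+1}$ of the parts of $D$, that this never creates a repeat in $\alpha$ and that the construction can be inverted one pair at a time. Establishing that this rule is well-defined and bijective is the technical heart of the argument; everything preceding it is routine.
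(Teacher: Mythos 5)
Your setup is sound: the interpretation of the right-hand side as quadruples $(\alpha,\beta,\gamma,\delta)$ with weight $z^{\ell(\alpha)+2\ell(\gamma)}y^{\ell(\alpha)+2\ell(\delta)}q^{|\alpha|+|\beta|+|\gamma|+|\delta|}$ is correct, and so is the reduction to distinct parts: the parity argument showing that $\pi'$ and $D'$ have the same number of odd parts (both equal to $D_1-D_2+D_3-\cdots$) is right, and peeling $\beta$ and $\gamma$ off the repeated parts correctly accounts for the factors $(1-q^{4j})^{-1}(1-z^2q^{4j-2})^{-1}$. But the argument stops exactly where you say the real work lies. The bijection $D\leftrightarrow(\alpha,\mu)$ is never constructed; it is only specified by the statistics it must preserve, together with a proposed greedy rule that in fact fails. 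Take $D=(4,1)$: the single pair $(4,1)$ has one odd entry and difference $3$, so $\alpha$ and $\mu$ must each receive exactly one part, with sizes summing to $5$; your rule places the least unused odd integer, $1$, into $\alpha$, forcing the part of $\mu$ to be $4$, which is not $\equiv 2\pmod 4$. (The only valid assignment here is $\alpha=(3)$, $\mu=(2)$.) So the ``single global rule'' is not merely unverified --- as stated it is wrong, and the per-pair size-conservation constraint you impose makes the local choices genuinely interdependent. Since you yourself identify this step as the technical heart, the proposal is a plan rather than a proof.

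For comparison, the paper sidesteps this difficulty by running the bijection in the opposite direction and never isolating a distinct-parts identity. It splits $\kappa$ into $\lambda$ (all even parts together with one copy of each odd part of odd multiplicity) and $\mu$ (the leftover even multiplicities of odd parts); replaces each part $\lambda_i$ by the pair $\lceil\lambda_i/2\rceil,\lfloor\lambda_i/2\rfloor$ to get $\zeta$; conjugates $\mu$ to get $\xi$; and sets $\pi=\zeta+\xi$. The statistics $r$ and $s$ then come out by direct computation, and the only delicate point --- inverting the addition $\pi=\zeta+\xi$ --- is settled by an explicit formula recovering $\xi_{2i}$ from $\xi_1$ and the partial alternating sums and parities of the pairs $(\pi_{2j-1},\pi_{2j})$. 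If you want to salvage your route, the analogue of that device is what you would need: a closed-form, provably invertible description of which odd integers land in $\alpha$, not a greedy sweep.
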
 
\noindent which he proves analytically as the limiting case of a certain polynomial 
identity~\cite[Theorem 1]{gea:stanley}.  At the end of
the paper, Andrews states that (\ref{ASI}) ``cries out for a combinatorial proof."
The purpose of this paper is to present such a proof.  

\section{Definitions and Notation}
The following defintions and notations follow Macdonald~\cite{igm}.

 A \emph{partition} $\pi$ of an integer $n$ 
is a nonincreasing sequence of nonnegative integers
containing only finitely many nonzero terms such that the sum of
the terms is $n$.  Thus,
  \begin{equation} \pi = (\pi_1, \pi_2, \pi_3, \dots ), \label{defptn}
  \end{equation} with
  \[ \pi_1 \geqq \pi_2 \geqq \pi_3 \geqq \dots\geqq 0 \] 
  \[ \sum_{i=1}^\infty \pi_i = n, \] and
$\pi_i \in \mathbb Z.$
Since the tail of such a sequence must, by definition, be an infinite
string of zeros, it will be convenient to suppress this in the notation.
Thus the partition $(2,2,1,1,1,0,0,0,0,0,\dots)$ is normally written
$(2,2,1,1,1)$.  In any event, no distinction shall be drawn among 
sequences written with or without any number of trailing zeros. 

The nonzero terms $\pi_i$ in (\ref{defptn}) are called the \emph{parts} of the
partition $\pi$.  
The number of parts of $\pi$ is called the \emph{length} of $\pi$ and is
denoted $l(\pi)$.  Additionally, I will follow Stanley~\cite{rs} and
Andrews~\cite{gea:stanley} and let $\Os(\pi)$ denote the number of 
\emph{odd} parts in the partition $\pi$. 

 The set of all partitions is denoted $\mathcal{P}$.  
 
 An alternate notation for a partition $\pi$ 
is \[ \pi = \langle 1^{m_1} 2^{m_2} \cdots r^{m_r} \dots \rangle \]
which means that exactly $m_i$ of the parts of $\pi$ are equal to $i$.
The number $m_i = m_i(\pi)$ is
called the \emph{multiplicity} of $i$ in $\pi$.  
Thus, 
\[ (6,5,5,3,3,3,3,2) = \langle 2\ 3^4\ 5^2\ 6 \rangle. \]

 If $\pi = (\pi_1, \pi_2, \pi_3, \dots )$ and $\lambda = (\lambda_1, \lambda_2,
\lambda_3, \dots )$ are two partitions, their \emph{sum} is defined
termwise:
  \[ (\pi + \lambda)_i := \pi_i + \lambda_i \] 
for $i\in\mathbb{Z}_+$.

 The \emph{union} of two partitions $\lambda$ and $\pi$ is obtained
by merging the entries of $\lambda$ with those of $\pi$ and arranging
the resulting entries in nonincreasing order, for example,
   \[ (3,3,2,1) \cup (5,3,2,2) = (5,3,3,3,2,2,2,1) .\] 

 The \emph{conjugate} of a partition $\pi=(\pi_1,\pi_2,\pi_3,\dots)$, 
denoted $\pi'=(\pi'_1, \pi'_2, \pi'_3, \dots )$ is partition such that
   \begin{equation}\label{ConjDef}
     \pi'_i = \sum_{j\geqq i} m_j (\pi).
   \end{equation}
Note that $\pi'' = \pi$, $\pi_1 = l(\pi')$, and
\begin{equation}\label{MultConj}
   m_{i} (\pi') = \pi_{i} - \pi_{i+1}.
\end{equation}
Usually, the conjugate of a partition is thought of in terms
of the transposition of its \emph{Ferrers graph} or \emph{Young diagram};
see e.g.~\cite[p. 6]{Andrews1} or \cite[p. 2]{igm}.   

 Finally, define the parity function
 \[ P(i) = \left\{ \begin{array}{ll}
                        0 &\mbox{if $i$ is even} \\
                        1 &\mbox{if $i$ is odd.} 
                     \end{array} \right. \]
\section{Combinatorial Proof of Theorem~\ref{mainresult}}
\subsection{The Plan}
I will start with the set of all partitions $\mathcal P$ counted in such
a way so that it is easily seen to be generated by the infinite product
on the right hand side of (\ref{ASI}).  I will then map 
$\Ps$ bijectively onto itself while keeping track of enough of the internal
counting to see that (\ref{ASI}) holds.

  In order to keep the presentation as straightforward as possible, the
mapping will be presented in three simple steps (the maps $\alpha$, $\beta$ and $\gamma$), and the bijection will
be the composition of these three maps.

\subsection{The Generating Function}
 By standard elementary reasoning (see~\cite[Chapters 1--2]{Andrews1}), 
\[ \prod_{j=1}^\infty \frac{1}{1-q^{4j}}\] 
is the generating function for
partitions into parts $\equiv 0\pmod{4}$, 
\[ \prod_{j=1}^\infty \frac{1}{1-z^2 q^{4j-2}}\] 
is the generating function
for partitions into parts $\equiv 2\pmod{4}$, where each part is
counted with a weight of $z^2$, and 
\[ \prod_{j=1}^\infty
 \frac{1+yxq^{2j-1}}{1-y^2 q^{4j-2}}\]
is the generating
function for partitions into odd parts where each part is counted with
weight $y$ and each distinct integer of odd multiplicity is counted with
weight $x$.

 Thus, \[ \sum_{n\geqq 0} R(t,\rho,s;n) x^t z^{2\rho} y^s q^n  =
 \prod_{j=1}^\infty \frac{(1+yxq^{2j-1})}{(1-q^{4j})(1-z^2q^{4j-2})
  (1-y^2 q^{4j-2})}, \] 
where $R(n) =$ the number of partitions of $n$ with 
$s$ odd parts, $\rho$ parts congruent to
$2\pmod{4}$ and $t$ different odd integers of odd multiplicity.                       

  Upon setting $x$ equal to $z$ and renaming $2\rho + t$ as $r$, we
see that the infinite product of (\ref{mainresult}) generates
partitions with exactly $s$ odd parts and such that
twice the number of parts congruent to $2\pmod{4}$ plus 
the number of different odd integers of odd multiplicity is 
exactly $r$.  Thus, by mapping each of these partitions to a 
partition with exactly $r$ odd parts and $s$ odd parts in its conjugate,
we will have a bijective proof that (\ref{ASI}) holds. 

\subsection{The Mapping}
  Start with an arbitrary partition $\kappa = 
\langle 1^{m_1(\kappa)} 2^{m_2(\kappa)} 3^{m_3(\kappa)} \cdots \rangle$.
Define a map 
  \[ \alpha : \Ps \rightarrow \A \times \B, \]
where $\A$ is the set of all partitions with no repeated odd parts
and $\B =\Ps \setminus \A$ = the set of partitions with only odd parts of
even multiplicity, by
 \[ \alpha(\kappa) = (\lambda, \mu), \]
where 
\begin{equation}\label{lambdadef} \lambda = \langle 1^{P(m_1(\kappa))} 2^{m_2(\kappa)} 
  3^{P(m_3(\kappa))} 4^{m_4(\kappa)} \cdots \rangle, 
\end{equation}
and \[ \mu = \langle 1^{m_1(\kappa)-P(m_1(\kappa))} 
3^{m_3(\kappa) - P(m_3(\kappa))} 5^{m_5(\kappa) - P(m_5(\kappa))} \cdots 
\rangle. \] 
Informally, $\alpha$ separates $\kappa$ into two partitions $\lambda$
and $\mu$, where $\lambda$ gets all of the even parts of $\kappa$ and
one copy of each odd part of odd multiplicity in $\kappa$, and $\mu$ gets
what is left over, i.e. even quantities of odd parts. 

  Notice that 
\begin{equation} \Os(\lambda) = t \end{equation} and
\begin{equation} 
    l(\mu) = \Os(\mu) =  s-t. 
\label{OddMu}\end{equation}

  Define the map $\beta_1: \A\rightarrow \C$
so that $\beta_1(\lambda) := \zeta = (\zeta_1, \zeta_2, \dots)$ where, for $i\in\mathbb Z_+$,
\[ \zeta_{2i-1}= \lfloor \frac{\lambda_i +1}{2} \rfloor \] and
\[ \zeta_{2i}  = \lfloor \frac{\lambda_i   }{2} \rfloor \]
or equivalently,
\[ m_{i}(\zeta) = P(m_{2i-1}(\kappa)) + 2 m_{2i}(\kappa) + P(m_{2i+1}(\kappa)) .\]

  Informally, this means that every even part $2j$ of $\lambda$ is mapped by $\beta_1$ to a pair of $j$'s and each and every odd part $2j+1$ is mapped to the pair $j+1$, $j$.  So, $\C$ is the set of partitions 
$\zeta= (\zeta_1, \zeta_2, \dots)$ such that for all $i\in\mathbb Z_+$,
 \begin{equation}\label{ZetaChar}
   \zeta_{2i-1} - \zeta_{2i} \leqq 1. 
 \end{equation}
 
 Define the map $\beta_2: \B\rightarrow \D$ so that
$\xi:= \beta_{2} (\mu) = \mu'$.

Note that 
$\D$ is the set of partitions $\xi$ such that for all $i\in\mathbb Z_{+}$,
$\xi_{2i} = \xi_{2i+1},$ and $\xi_{i}$ is even.

 Also, $\Os(\zeta) = 2\rho + t$, $\Os(\xi) = 0$,
and $\Os(\xi') = \Os(\mu) = s - t$.
\begin{claim}\label{claim1} 
$\Os(\zeta') = t$
\end{claim}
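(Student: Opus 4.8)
The plan is to compute $\zeta'_i$ modulo $2$ for each $i\in\mathbb{Z}_+$ and then count how many values of $i$ produce an odd result. By the definition of the conjugate (\ref{ConjDef}) we have $\zeta'_i=\sum_{j\geqq i}m_j(\zeta)$, and $\beta_1$ was given explicitly in terms of the multiplicities of $\kappa$ by
\[ m_j(\zeta)=P(m_{2j-1}(\kappa))+2m_{2j}(\kappa)+P(m_{2j+1}(\kappa)). \]
Substituting and reducing modulo $2$, the $2m_{2j}(\kappa)$ terms drop out, leaving
\[ \zeta'_i\equiv\sum_{j\geqq i}\bigl(P(m_{2j-1}(\kappa))+P(m_{2j+1}(\kappa))\bigr)\pmod 2. \]

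First I would observe that this last sum telescopes modulo $2$: writing it out, every term $P(m_{2k-1}(\kappa))$ with $k>i$ occurs exactly twice — once from the first summand with $j=k$ and once from the second with $j=k-1$ — whereas $P(m_{2i-1}(\kappa))$ occurs once. (All sums are finite, since $\kappa$ has only finitely many nonzero multiplicities.) Hence $\zeta'_i\equiv P(m_{2i-1}(\kappa))\pmod 2$.

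Next, $\zeta'_i$ is odd precisely when $m_{2i-1}(\kappa)$ is odd, i.e.\ precisely when $2i-1$ is an odd part of $\kappa$ occurring with odd multiplicity. By (\ref{lambdadef}) this is exactly the condition $m_{2i-1}(\lambda)=1$, and since $\lambda\in\A$ has no repeated odd parts the number of such $i$ equals $\Os(\lambda)$, which was already shown to be $t$. Therefore $\Os(\zeta')=\#\{i:\zeta'_i\text{ odd}\}=t$, as claimed.

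I do not expect a genuine obstacle here: the only point requiring care is the index bookkeeping in the telescoping step — verifying that each $P(m_{2k-1}(\kappa))$ with $k>i$ is counted with even multiplicity while the boundary term $P(m_{2i-1}(\kappa))$ is counted with odd multiplicity — after which the claim is immediate from the properties of $\alpha$ and $\lambda$ recorded above.
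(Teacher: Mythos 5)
Your proof is correct and rests on the same key observation as the paper's: that $\zeta'_i$ is odd precisely when $2i-1$ is a part of $\lambda$ (equivalently, when $m_{2i-1}(\kappa)$ is odd), so that $\Os(\zeta')=\Os(\lambda)=t$. The paper argues this verbally from the pair-of-parts description of $\beta_1$, while you derive the same parity fact algebraically from the multiplicity formula via a telescoping sum modulo $2$; this is just a more formal rendering of the identical idea.
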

\begin{proof}[Proof of Claim~\ref{claim1}] To see this, notice that by
(\ref{ZetaChar}), the only way that $\zeta'_{1}$ can be odd is if
1 is a part of $\lambda$ (which of course means that $m_1(\kappa)$ is
odd).  This is so because every part of $\lambda$ \emph{other than a 1} is
mapped to a pair of parts, but $1$ is mapped to $(1,0)$, which, by
definition is only \emph{one part} since $0$ does not count as a part.

  Next, the only way that $\zeta'_2$ can be odd is if $\lambda$ contains
a $3$; and in general, the only way that $\zeta'_{i}$ can be odd is
if $\lambda$ contains $2i-1$ as a part (which means that 
$m_{2i-1}(\kappa)$ is odd).  
\end{proof}

 Finally, define a map $\gamma: \C \times \D \rightarrow \Ps$,
by $\pi = \gamma(\zeta,\xi):= \zeta + \pi$.   Thus, we have
$\Os(\pi) = \Os(\zeta) + \Os(\xi) = 2\rho + t =: r$ 

\begin{claim}\label{claim2}
$\Os(\pi') = \Os(\zeta') + \Os(\xi') = s$ as desired, i.e. no
odd parts in the conjugate are ``lost" through the addition
of the partitions $\zeta$ and $\xi$. 
\end{claim}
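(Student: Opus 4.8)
The plan is to reduce Claim~\ref{claim2} to a single formal identity about multiplicities, after which the value $s$ drops straight out of the two parities that have already been recorded. The essential point behind the claim is that although forming the termwise sum $\pi=\zeta+\xi$ could in principle merge parts and distort a parity count — this is precisely why the evenness of every part of $\xi$ was needed to obtain $\Os(\pi)=r$ — at the level of the \emph{conjugates} this sum behaves like a disjoint merge, so no odd part of the conjugate can possibly be ``lost''.

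The key step is to establish that, for every $i\in\mathbb Z_+$,
\[ m_i(\pi') = m_i(\zeta') + m_i(\xi'). \]
This is immediate from (\ref{MultConj}): since partition addition is termwise we have $\pi_i=\zeta_i+\xi_i$ for all $i$, and therefore
\[ m_i(\pi') = \pi_i-\pi_{i+1} = (\zeta_i-\zeta_{i+1})+(\xi_i-\xi_{i+1}) = m_i(\zeta')+m_i(\xi'). \]
(Equivalently, one may invoke the standard fact that $(\sigma\cup\tau)'=\sigma'+\tau'$ with $\sigma=\zeta'$ and $\tau=\xi'$, which gives $\pi'=\zeta'\cup\xi'$ outright.) Summing the displayed identity over all odd $i$, and using that the number of odd parts of any partition $\nu$ equals $\sum_{i\ \mathrm{odd}}m_i(\nu)$, yields
\[ \Os(\pi') = \sum_{i\ \mathrm{odd}} m_i(\pi') = \sum_{i\ \mathrm{odd}} m_i(\zeta') + \sum_{i\ \mathrm{odd}} m_i(\xi') = \Os(\zeta')+\Os(\xi'). \]

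Finally I would substitute the parities already in hand: $\Os(\zeta')=t$ by Claim~\ref{claim1}, and $\Os(\xi')=\Os(\mu)=s-t$, which was recorded just before Claim~\ref{claim1} (it follows from $\xi=\mu'$, hence $\xi'=\mu$, together with (\ref{OddMu})). Hence $\Os(\pi')=t+(s-t)=s$, which is the assertion of Claim~\ref{claim2}; combined with $\Os(\pi)=r$, this is exactly the bookkeeping needed to see that the composite map sends a partition counted by the infinite product to one with $r$ odd parts whose conjugate has $s$ odd parts, establishing (\ref{ASI}). I do not expect a genuine obstacle in the computation; the only thing requiring care is making transparent \emph{why} the naive worry about lost odd parts is unfounded, namely the contrast between the behaviour of $\Os$ under $\pi\mapsto\zeta+\xi$ (not additive in general, rescued for $\Os(\pi)$ by $\xi$ having only even parts) and under $\pi\mapsto\pi'=\zeta'\cup\xi'$ (always additive). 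For that reason I would present the short multiplicity computation above rather than a more elaborate Ferrers-graph argument.
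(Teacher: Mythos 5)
Your proof is correct and rests on the same key fact as the paper's own proof, namely (\ref{MultConj}) applied to the termwise sum $\pi=\zeta+\xi$, which gives $m_i(\pi')=m_i(\zeta')+m_i(\xi')$ (equivalently $\pi'=\zeta'\cup\xi'$) and hence the additivity of $\Os$ on conjugates. The only difference is organizational: the paper carries the single chain of equalities all the way back to $\Os(\lambda)+\Os(\mu)$ using the explicit formulas $\zeta_{2i-1}=\lfloor(\lambda_i+1)/2\rfloor$ and $\xi=\mu'$, without invoking Claim~\ref{claim1}, whereas you stop at $\Os(\zeta')+\Os(\xi')$ and substitute the previously recorded values $t$ and $s-t$; both routes are valid.
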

\begin{proof}[Proof of Claim~\ref{claim2}]
\begin{eqnarray*}
\Os(\pi')&=& \sum_{i\geqq 1} m_{2i-1}(\pi')\\
    &=& \sum_{i\geqq 1} (\pi_{2i-1} - \pi_{2i}) \\
    &=& \sum_{i\geqq 1} \big( (\zeta_{2i-1} + \xi_{2i-1}) - (\zeta_{2i}+\xi_{2i}) \big)\\
    &=& \sum_{i\geqq 1} \left(
     \left(\lfloor \frac{\lambda_i + 1}{2} \rfloor + \mu'_{2i-1}\right)
      -\left(\lfloor \frac{\lambda_i    }{2} \rfloor + \mu'_{2i}\right)
       \right)\\
    &=& \sum_{i\geqq 1}\left(\lfloor \frac{\lambda_i + 1}{2} \rfloor 
      - \lfloor \frac{\lambda_i    }{2} \rfloor 
      + \mu'_{2i-1} - \mu'_{2i} \right)\\ 
   &=& \sum_{i\geqq 1} \lfloor \frac{\lambda_i + 1}{2} \rfloor 
      - \lfloor \frac{\lambda_i    }{2} \rfloor 
      + m_{2i-1}(\mu) \quad\mbox{by (\ref{MultConj})}\\
   &=& \Os(\lambda) + \Os(\mu) \\
   &=& t + (s-t)\\
   &=& s,  
\end{eqnarray*}
where the third to last equality follows from the fact that
$\lfloor \frac{\lambda_i + 1}{2} \rfloor 
      - \lfloor \frac{\lambda_i    }{2} \rfloor$ equals $0$ if $i$ is
  even, and equals $1$ if $i$ is odd.
\end{proof}

 Now let us consider the invertibility of each of the maps.  
It is easy to see that
$\alpha$ is invertible; $\alpha^{-1}(\lambda,\mu) = \lambda \cup \mu$.
$\beta_2^{-1} = \beta_2$ since conjugation
is an involution. 
$\beta_1^{-1}(\zeta) = \lambda = (\lambda_{1}, \lambda_{2}, \dots)$, where
 \[ \lambda_{i} = \zeta_{2i-1} + \zeta_{2i} \]
for $i\in\mathbb Z_{+}$.

  The invertibility of $\gamma$, however, is more subtle.  Given any
partition $\pi$, we need to ``split it" into the sum of two partitions
$\zeta\in\C$ and $\xi\in\D$.  It is sufficient to recover
$\xi_1$ and  $\xi_{2i}$ for
$i\geqq 1$ since $\xi_{2i} = \xi_{2i+1}$ for all $i\geqq 1$, and once we
have $\xi$, we know $\zeta$ since $\zeta + \xi$ = $\pi$.
Of course, we need to do this given only the partition $\pi$.

  The easiest case is finding $\xi_1$ since we know that $\xi_1 = s - t$.
Both $s$ and $t$ are readily available to us in  $\pi$: Besides counting 
the number of odd parts in $\kappa$, $s$ counts
the number of odd parts in $\pi'$.  Now $t$ counts the number of odd
parts of odd multiplicity in $\kappa$ which translates into the number
of odd parts in $\lambda$, which translates into the number of pairs
$(\zeta_{2i-1}, \zeta_{2i})$ in $\zeta$ whose sum is odd, which in turn
translates to the number of pairs
$(\pi_{2i-1}, \pi_{2i})$ in $\pi$ whose sum is odd (since all parts of
$\xi$ are even).  Thus we can recover $\xi_1$ from $\pi$.

Now that we have both $\pi$ and $\xi_1$ in hand, we can recover the
rest of $\xi$:   
\begin{claim}\label{claim3}
For $i\geqq 1$, \[ \xi_{2i} = 
\xi_1 - \sum_{j=1}^i (\pi_{2i-1} - \pi_{2i})  + 
  \sum_{j=1}^i P(\pi_{2i-1} + \pi_{2i}).\]
\end{claim}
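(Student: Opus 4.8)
The plan is to recover $\xi_{2i}$ by tracking the partial sums of $\pi'$ and using the defining structure of $\C$ and $\D$. The key observation is that $\xi_{2i}$ is determined once I know how much of the ``column difference'' $\pi_{2i-1}-\pi_{2i}$ comes from $\zeta$ versus from $\xi$. Since $\xi\in\D$ satisfies $\xi_{2i}=\xi_{2i+1}$ for all $i$, telescoping the differences $\xi_{2j-1}-\xi_{2j}$ over $j=1,\dots,i$ collapses to $\xi_1-\xi_{2i}$ (the odd–even pairs in positions $1,2$; $3,4$; $\dots$; $2i-1,2i$ annihilate internally except for the top $\xi_1$ and the bottom $\xi_{2i}$, because $\xi_2=\xi_3$, $\xi_4=\xi_5$, and so on). Hence $\sum_{j=1}^i(\xi_{2j-1}-\xi_{2j}) = \xi_1-\xi_{2i}$, which rearranges to $\xi_{2i}=\xi_1-\sum_{j=1}^i(\xi_{2j-1}-\xi_{2j})$.

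Next I would replace the $\xi$-differences by $\pi$-data. Writing $\pi=\zeta+\xi$ gives $\xi_{2j-1}-\xi_{2j}=(\pi_{2j-1}-\pi_{2j})-(\zeta_{2j-1}-\zeta_{2j})$, so
\[ \xi_{2i}=\xi_1-\sum_{j=1}^i(\pi_{2j-1}-\pi_{2j})+\sum_{j=1}^i(\zeta_{2j-1}-\zeta_{2j}). \]
It remains to show $\zeta_{2j-1}-\zeta_{2j}=P(\pi_{2j-1}+\pi_{2j})$ for each $j$. By the characterization (\ref{ZetaChar}), $\zeta_{2j-1}-\zeta_{2j}\in\{0,1\}$, so $\zeta_{2j-1}-\zeta_{2j}=P(\zeta_{2j-1}+\zeta_{2j})$ (a number in $\{0,1\}$ equals the parity of the consecutive pair's sum, since $\zeta_{2j-1}+\zeta_{2j}=2\zeta_{2j}+(\zeta_{2j-1}-\zeta_{2j})$). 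Finally, because every part of $\xi$ is even, $\pi_{2j-1}+\pi_{2j}\equiv\zeta_{2j-1}+\zeta_{2j}\pmod 2$, so $P(\pi_{2j-1}+\pi_{2j})=P(\zeta_{2j-1}+\zeta_{2j})=\zeta_{2j-1}-\zeta_{2j}$. Substituting this into the displayed formula yields the claim. (The indices in the claim as stated read $\pi_{2i-1},\pi_{2i}$ inside the sum; these should be read as $\pi_{2j-1},\pi_{2j}$, matching the summation variable.)

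The main obstacle is the telescoping step: one must be careful that the sum $\sum_{j=1}^i(\xi_{2j-1}-\xi_{2j})$ really collapses to $\xi_1-\xi_{2i}$ and not to something involving $\xi_{2i+1}$ or an extra boundary term. This is exactly where the defining property of $\D$—namely $\xi_{2i}=\xi_{2i+1}$—is used, and it is worth spelling out explicitly rather than waving at ``telescoping,'' since the pairing is between an even-indexed and the next odd-indexed part (which are equal), not between consecutive entries in the naive sense. Everything else is a short parity computation: the identity $\zeta_{2j-1}-\zeta_{2j}=P(\pi_{2j-1}+\pi_{2j})$ follows immediately from (\ref{ZetaChar}) together with the evenness of the parts of $\xi$. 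With $\xi_{2i}$ in hand for all $i$ (and $\xi_1$ already recovered in the preceding paragraph of the paper), we obtain $\xi$ completely, hence $\zeta=\pi-\xi$, establishing the invertibility of $\gamma$ and completing the bijection.
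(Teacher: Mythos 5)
Your proof is correct and follows essentially the same route as the paper: substitute $\pi=\zeta+\xi$, observe that the parity term $P(\pi_{2j-1}+\pi_{2j})$ exactly cancels $\zeta_{2j-1}-\zeta_{2j}$ (the paper verifies this via the floor formulas for $\beta_1$, you via the characterization (\ref{ZetaChar}) plus the evenness of the parts of $\xi$ --- the same fact in different clothing), and then telescope $\sum_{j=1}^i(\xi_{2j-1}-\xi_{2j})$ to $\xi_1-\xi_{2i}$ using $\xi_{2k}=\xi_{2k+1}$. You also correctly flag the typo in the statement (the summation indices should read $\pi_{2j-1},\pi_{2j}$), which the paper's own proof silently corrects.
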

\begin{proof}[Proof of Claim~\ref{claim3}]
\begin{eqnarray*}
&& \xi_1 - \sum_{j=1}^i (\pi_{2j-1} - \pi_{2j}) 
  +\sum_{j=1}^i P(\pi_{2j-1} + \pi_{2j}) \\
&=& \xi_1 - \sum_{j=1}^i (\zeta_{2j-1} + \xi_{2j-1} - \zeta_{2j} - \xi_{2j})  +\sum_{j=1}^i P(\lambda_j)\\ 
&=& \xi_1- \sum_{j=1}^i \left( 
   \lfloor \frac{\lambda_j + 1}{2} \rfloor + \xi_{2j-1}
  -\lfloor \frac{\lambda_j    }{2} \rfloor - \xi_{2j} \right)
  +\sum_{j=1}^i P(\lambda_j)\\
&=& \xi_1 +\sum_{j=1}^i \left( P(\lambda_j) - \left(
    \lfloor \frac{\lambda_j + 1}{2} \rfloor
   -\lfloor \frac{\lambda_j    }{2} \rfloor \right) \right)
   - \sum_{j=1}^i (\xi_{2j-1} - \xi_{2j})\\
&=& \xi_1 + \sum_{j=1}^i 0 - (\xi_1 + \xi_2) - \cdots 
-(\xi_{2i-1}-\xi_{2i})\\
&=& \xi_1 - \xi_1 + (\xi_2 - \xi_3) + \dots +(\xi_{2i-2} - \xi_{2i-1}) + \xi_{2i}\\
&=& \xi_{2i}  
\end{eqnarray*}
\end{proof}

\section{Example}
Start with a partition 
\[ \kappa = \langle 1^5\ 2\ 3^3\ 4^2\ 5\ 6^2\ 7^2\ 9 \rangle. \]
Under $\alpha$, the parts of $\kappa$ are separated into $\lambda$
and $\mu$, where $\lambda$ gets all of the even parts of $\kappa$,
and one copy of each odd part of odd multiplicity and $\mu$ gets
what is left over, i.e. even quantities of odd parts:
\begin{gather*}
(\lambda,\mu) = (\langle 1\ 2\ 3\ 4^2\ 5\ 6^2\ 9 \rangle, 
          \langle 1^4\ 3^2\ 7^2 \rangle )\\
   =\big( (9,6,6,5,4,4,3,2,1), (7,7,3,3,1,1,1,1) \big)
\end{gather*}
  Next, $\beta_1$ takes each even part $2j$ of $\lambda$ to a pair
of $j$'s and each odd part $2j+1$ to the pair $j+1,j$, and $\beta_2$
maps $\zeta$ to its conjugate.
\begin{gather*}
(\zeta,\xi) = \big( (5,4,\ 3,3,\ 3,3,\ 3,2,\ 2,2,\  2,2,\ 2,1,\ 1,1,\ 1,0),
  (8,4,4,2,2,2,2) \big)
\end{gather*}
Finally, $\gamma$ merges $\zeta$ and $\xi$ into a single partition
$\pi$ via partition addition:
\[ \pi = (13,8,7,5,5,5,5,2,2,2,2,2,2,1,1,1,1) = 
\langle 1^4\ 2^6\ 5^4\ 7\ 8\ 13 \rangle\]

  Now let us consider the inverse map, pretending for the moment that
we only know $\pi$.  As in the general case, the hardest part will be
to separate $\pi$ into $\zeta$ and $\xi$.  
First of all, $\pi' = \langle 1^5\ 2\ 3^2\ 7^3\ 13\ 17
\rangle$, so $\Os(\pi') = s =12$.  Next, notice that 
$\pi = (13,8,\; 7,5,\; 5,5,\; 5,2,\; 2,2,\; 2,2,\; 2,1,\; 1,1,\; 1,0)$ has
$t=4$ pairs of opposite parity.  Thus we know that $\xi_1 = 12-4 = 8$.   

  Next, to find $\xi_2$, we take $\xi_1 -(\pi_1 - \pi_2) + P(\pi_1 +\pi_2) =
8-(13-8)+1 = 4$.  Thus $\xi_3 = 4$ also.   

  To find $\xi_4$, we take $\xi_1 - (\pi_1 - \pi_2 + \pi_3 - \pi_4) +
P(\pi_1 + \pi_2) + P(\pi_3 + \pi_4) = 8 - (13-8 + 7-5) + 1 + 0 =2.$
Thus $\xi_5= 2$, also.

  Similiarly, we find $\xi_6 = \xi_7 = 2$ and $\xi_i = 0$ for $i\geqq 8$.
Thus we have $\xi = (8,4,4,2,2,2,2)$ and from this we immediately get  
$\zeta = \langle 1^4\ 2^6\ 3^5 \ 4\ 5 \rangle$ by subtracting $\pi_i - \xi_i$
for $i\geqq 1.$

 Having gotten this far, the rest is simple.  $\lambda =
(5+4, 3+3, 3+3, 3+2, 2+2, 2+2, 2+1, 1+1, 1+0) = (9,6,6,5,4,4,3,2,1)$, and
$\mu = \xi' = (7,7,3,3,1,1,1,1).$  Finally, $\kappa = \lambda \cup \mu
 = (9,7,7,6,6,5,4,4,3,3,3,2, 1,1,1,1,1)$.

  Note: I have written a Maple package which performs all of the
mappings described in this paper, and also provides additional
examples.  Please visit 
\texttt{http://www.math.rutgers.edu/\~{}asills/papers.html} to
download it.

\section{Original Presentation}
This paper provides the full account of the talk I presented in the 
special session on $q$-series at the AMS sectional meeting in Baton
Rouge on March 15, 2003~\cite{avs}.
 
\section{Subsequent Developments}
It has come to my attention that Cilanne Boulet~\cite{cb} and 
Ae Ja Yee~\cite{ajy} 
have independently discovered additional 
combinatorial proofs of Theorem~\ref{mainresult}.
 
\section*{Acknowledgements}
I am grateful to George Andrews for initially supplying the problem 
and to the referees for their many helpful suggestions.  
I also wish to express my thanks to the organizers of the Baton Rouge
$q$-series special session,
Mourad E. H. Ismail and Stephen C. Milne, for the opportunity to
present this material.

\end{document}